\documentclass[12pt,reqno]{amsart}
\usepackage{microtype}
\usepackage{tikz}
\usepackage{xcolor}
\usepackage{latexsym,amsmath,amssymb,epic}
\usepackage{latexsym,amssymb,epic}
\usepackage{amsmath,amsthm}
\usepackage{color}
\usepackage[bottom=3.5cm,left=3.5cm,right=3.5cm]{geometry}
\usepackage{etoolbox}
\usepackage{mathrsfs}
\usepackage[shortlabels]{enumitem}
\usepackage[noadjust]{cite}
\usepackage{hyperref}
\hypersetup{colorlinks=true, 
	linktoc=all, 
	linkcolor=blue} 

\allowdisplaybreaks[4]

\numberwithin{equation}{section}

\theoremstyle{plain}
\newtheorem{theorem}{Theorem}[section]
\newtheorem*{theorem*}{Theorem}

\newtheorem{lemma}[theorem]{Lemma}

\theoremstyle{definition}

\newtheorem*{example*}{Example}

\newtheorem*{conjecture*}{Conjecture}

\newtheorem*{remark*}{Remark}
\newtheorem*{remarks*}{Remarks}

\makeatletter
\patchcmd{\@settitle}{\uppercasenonmath}{\boldmath\uppercasenonmath}{}{}
\patchcmd{\section}{\scshape}{\bfseries\boldmath}{}{}
\patchcmd{\subsection}{\bfseries}{\bfseries\boldmath}{}{}
\renewcommand{\@secnumfont}{\bfseries}
\makeatother

\makeatletter
\renewcommand{\maketag@@@}[1]{\hbox{\normalsize\normalfont#1}}%
\@namedef{subjclassname@2020}{\textup{2020} Mathematics Subject Classification}

\makeatother

\begin{document}

\title[An elementary approach to general theta function identities]
{An elementary approach to Sun Kim's general theta function identities}

\author[D. Tang]{Dazhao Tang}

\address[Dazhao Tang]{School of Mathematical Sciences, Chongqing Normal University,
Chongqing 401331, P.R. China}
\email{dazhaotang@sina.com}

\date{\today}

\begin{abstract}
Ramanujan's modular equations of degrees $3$, $5$, $7$, $11$ and $23$ are closely
related to certain theta function identities. Warnaar generalized the identities
arising from the modular equations of degrees $3$ and $7$ to a general theta function
identity. Kim subsequently obtained a further general theta function identity
associated with the modular equations of degrees $5$, $11$ and $23$, and established
several general theta function identities containing known partition theorems as
special cases. In this paper, we present an elementary approach to Kim's general
theta function identities, yielding  $q$-series proofs of these identities within a
common framework.
\end{abstract}

\subjclass[2020]{11F27, 05A30}

\keywords{Theta function identities; Warnaar's identity; Jacobi's triple product
identity; Weierstrass' fundamental theta identity}

\thanks{}

\maketitle

\section{Introduction}

Throughout this paper, we assume that $q$ is a complex number satisfying $|q|<1$ and
use the standard $q$-Pochhammer symbol
\begin{align*}
(a;q)_\infty &:=\prod_{j=0}^\infty(1-aq^j).
\end{align*}
For notational convenience, we write
\begin{align*}
\theta(a_1,a_2,\ldots,a_m;q)=\prod_{i=1}^m(a_i;q)_\infty(q/a_i;q)_\infty.
\end{align*}

Five modular equations of degrees $3$, $5$, $7$, $11$ and $23$ appear in Ramanujan's
notebooks. These remarkable identities were also discovered independently by
Schr\"{o}ter \cite{Rus1887} and R. Russell \cite{Rus1890, Sch1890}. In their equivalent
$q$-series forms, these modular equations can be written as
\begin{align}
(-q;q^2)_\infty^2(-q^3;q^6)_\infty^2 &-(q;q^2)_\infty^2(q^3;q^6)_\infty^2\nonumber\\
 &\qquad=4q(-q^2;q^2)_\infty^2(-q^6;q^6)_\infty^2,\label{Mod-eq-3}\\
(-q;q^2)_\infty^4(-q^5;q^{10})_\infty^4 &-(q;q^2)_\infty^4(q^5;q^{10})_\infty^4
\nonumber\\
 &\qquad=8q+16q^3(-q^2;q^2)_\infty^4(-q^{10};q^{10})_\infty^4,\label{Mod-eq-5}\\
(-q;q^2)_\infty(-q^7;q^{14})_\infty &-(q;q^2)_\infty(q^7;q^{14})_\infty\nonumber\\
 &\qquad=2q(-q^2;q^2)_\infty(-q^{14};q^{14})_\infty,\label{Mod-eq-7}\\
(-q;q^2)_\infty^2(-q^{11};q^{22})_\infty^2 &-(q;q^2)_\infty^2(q^{11};q^{22})_\infty^2
\nonumber\\
 &\qquad=4q+4q^3(-q^2;q^2)_\infty^2(-q^{22};q^{22})_\infty^2,\label{Mod-eq-11}\\
(-q;q^2)_\infty(-q^{23};q^{46})_\infty &-(q;q^2)_\infty(q^{23};q^{46})_\infty\nonumber\\
 &\qquad=2q+2q^3(-q^2;q^2)_\infty(-q^{46};q^{46})_\infty.\label{Mod-eq-23}
\end{align}
The identities \eqref{Mod-eq-3} and \eqref{Mod-eq-7} were established by Farkas and Kra
\cite{FK2000, FK2001} using theta function theory. Hirschhorn \cite{Hir2006}
subsequently gave an elementary $q$-series proof of \eqref{Mod-eq-3}.

Warnaar \cite{War2005} obtained a substantial generalization of \eqref{Mod-eq-3} and
\eqref{Mod-eq-7}. More precisely, he established the following general theta function
identity:
\begin{align}
\theta(-c,-ac,-bc,-abc;q) &-\theta(c,ac,bc,abc;q)\nonumber\\
 &\qquad\qquad=c\!\;\theta\big({-}1,-a,-b,-abc^2;q\big),\label{War-gene}
\end{align}
and provided three different proofs of \eqref{War-gene}, including a combinatorial
proof. Subsequently, Baruah and Berndt \cite{BB2007} observed that an equivalent form
of \eqref{War-gene} had already appeared in Ramanujan's Notebook; see
\cite[p.~47, Corollary]{Ber1991}. Kim \cite{Kim2021} subsequently obtained a further
generalization in the form of a theta function identity. In particular,
\eqref{Mod-eq-5}, \eqref{Mod-eq-11} and \eqref{Mod-eq-23} arise as special cases of her
identity, which we state below.

To streamline the statements of this and related theta function identities, we
introduce the following notation. Throughout this paper, set
\begin{align*}
x_1=c,\qquad x_2=d,\qquad x_3=\sqrt{cd},\qquad S(x)=\frac{abx^2}{q}.
\end{align*}
Define
\begin{align*}
A(x)=\theta\big({-}x,-ax,-bx,-abx;q^2\big).
\end{align*}
For a finite tuple $(u_1,u_2,\ldots,u_k)$, we further define
\begin{align*}
\mathscr{D}(u_1,u_2,\ldots,u_k) &=\prod_{j=1}^k A(u_j)-\prod_{j=1}^k A(-u_j),\\
\mathscr{S}(u_1,u_2,\ldots,u_k) &=\prod_{j=1}^k S(u_j).
\end{align*}

\begin{theorem}{\textup{\cite[Theorem~1.3]{Kim2021}}}\label{Main-THM-1}
Let $a,\!\:b,\!\:c,\!\:d,\!\:q\in\mathbb{C}\setminus\{0\}$ with $|q|<1$. Then
\begin{align}
\mathscr{D}\big(x_1^2,x_2^2,x_3^2\big) &-\mathscr{S}\big(x_1^2,x_2^2,x_3^2\big)
\mathscr{D}\big(qx_1^2,qx_2^2,qx_3^2\big)\nonumber\\
 &\qquad\qquad=2c^2\dfrac{\theta\big(a^2,b^2,d^3/c^3,a^2b^2c^4d^4/q^2;q^2\big)}
{\theta\big(a,b,d/c,abc^2d^2;q^2\big)}.\label{Kim-iden-1}
\end{align}
\end{theorem}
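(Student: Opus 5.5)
We prove \eqref{Kim-iden-1} by reducing it, via Jacobi's triple product identity, to Weierstrass' fundamental theta identity applied twice. We treat $c$ as the variable and $a,b,d$ as parameters. The guiding observation is that Warnaar's identity \eqref{War-gene}, with $q\to q^2$, already expresses a single difference $A(x)-A(-x)$ as a product of four theta functions. The left side of \eqref{Kim-iden-1} involves triple products $A(x_1^2)A(x_2^2)A(x_3^2)$, which we want to dissect in the same spirit.

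\textbf{Step 1 (splitting each factor).} For each $u$, write $A(u)=E(u)+O(u)$, where $E$ and $O$ are the parts even and odd in $u$ under $u\mapsto -u$. The triple product identity gives explicit series for these. Since $\theta(-u,-au;q^2)\theta(-bu,-abu;q^2)$ is a product of two two-factor theta functions, each of the form $\sum_n q^{n^2-n}(\cdot)^n$, we can perform the standard $2$-dissection by parity of the summation index. Then $A(u)-A(-u)=2O(u)$ and $A(u)+A(-u)=2E(u)$, and Warnaar's identity identifies $O(u)$ as $u\,\theta(-1,-a,-b,-abu^2;q^2)/2$ after the base change.

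\textbf{Step 2 (expanding the difference).} For three factors,
\[\prod A(u_j)-\prod A(-u_j)=2\bigl(O_1E_2E_3+E_1O_2E_3+E_1E_2O_3+O_1O_2O_3\bigr).\]
We then observe that $q\mapsto$ shift $u\to qu$ relates $E(qu)$ and $O(qu)$ to $O(u),E(u)$ up to the factor $S(u)$ (quasi-periodicity of $\theta(\cdot;q^2)$ under $u\mapsto qu$ swaps the parity classes). Hence the combination $\mathscr{D}(x^2)-\mathscr{S}(x^2)\mathscr{D}(qx^2)$ telescopes: the mixed terms with one odd factor cancel against the terms with three odd factors, or conversely. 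What survives is a symmetric expression of the form $2\sum(\text{products of } E\text{'s and } O\text{'s})$ in which each summand is a product of theta functions in $c^2,d^2,cd$.

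\textbf{Step 3 (collapsing to the right side).} Multiply through by $\theta(a,b,d/c,abc^2d^2;q^2)$. The right side becomes $2c^2\theta(a^2,b^2,d^3/c^3,a^2b^2c^4d^4/q^2;q^2)$. Here we use $\theta(a^2;q^2)=\theta(a,-a;q)$-type duplication, so that the factors $\theta(-1,-a,-b,\dots)$ from Step 1 combine with the denominators. We then apply Weierstrass' three-term identity
\[\theta(xy,x/y,uv,u/v)-\theta(xv,x/v,uy,u/y)=\tfrac{u}{y}\theta(yv,y/v,xu,x/u),\]
once to merge two products into one, and once more with $y,v$ chosen as $c,d$ combinations to produce the factor $\theta(d^3/c^3)$, which arises from the pair $(x_1^2,x_3^2)$ versus $(x_2^2,x_3^2)$.

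\textbf{Main obstacle.} The hard step is Step 3: choosing the parameters in Weierstrass' identity so that the cubic ratio $d^3/c^3$ and $a^2b^2c^4d^4/q^2$ appear. If a direct choice fails, the fallback is the classical uniqueness argument. Both sides, viewed as functions of $c$, satisfy the same functional equation under $c\mapsto qc$, which we check from $S$ and the quasi-periodicity. They therefore lie in a finite-dimensional space of theta functions of fixed degree. We would then verify equality at enough points: the zeros $d^3=c^3$ (where $x_1^2,x_2^2,x_3^2$ coincide up to cube roots of unity, forcing the left side to vanish by symmetry) and $a^2b^2c^4d^4=q^2$. Finally, we would confirm the constant by comparing a leading coefficient.
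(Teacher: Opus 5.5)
\textbf{Step 2 rests on a false claim, and this hides the key idea.} The half-period shift $u\mapsto qu$ does not swap the even and odd parts of $A$. Warnaar's formula $O(u)=u(-q^2;q^2)_\infty^2\theta(-a,-b,-abu^2;q^2)$, together with $\theta(q^2y;q^2)=-y^{-1}\theta(y;q^2)$, gives $S(u)O(qu)=O(u)$. So the odd part is \emph{preserved}, while $E'(u):=S(u)E(qu)$ is in general a multiple of neither $E(u)$ nor $O(u)$. Consequently it is the $O_1O_2O_3$ terms that cancel, not the mixed ones. The left side equals $2\sum_i O(u_i)\bigl(E(u_j)E(u_k)-E'(u_j)E'(u_k)\bigr)$, and these summands are not yet explicit theta products: neither $E$ nor $E'$ has an explicit product form.

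What you are missing is that the combinations
$E'(u)+E(u)=A(u)+S(u)A(-qu)=M(u)$ and $E'(u)-E(u)=S(u)A(-qu)-A(-u)=N(u)$ \emph{are} explicit products, namely
$M(u)=(-q;q^2)_\infty^2S(u)\theta(-aq,-bq,-abqu^2;q^2)$ and $N(u)=(q;q^2)_\infty^2S(u)\theta(aq,bq,abqu^2;q^2)$.
The paper obtains these by applying Warnaar's identity in bases $q$ and $-q$ to $A(u)A(\pm qu)-A(-u)A(\mp qu)$, whose products with $S(u)$ equal $D(u)M(u)$ and $D(u)N(u)$. Then $E_jE_k-E'_jE'_k=-\tfrac12(M_jN_k+N_jM_k)$, and the left side becomes the six products $-\tfrac12\sum D(x_i^2)\{M(x_j^2)N(x_k^2)+N(x_j^2)M(x_k^2)\}$.

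\textbf{Step 3 specifies no parameters, and the fallback fails as stated.} In the paper the six products reduce to $C(r,z)$ with $r=d/c$, $z=abc^2d^2$. Evaluating $C$ needs three ingredients: the product formula $H(X,Y)=2(-q^2;q^2)_\infty^2\theta(q^2XY,q^2X/Y;q^4)$ (a 2-dissection via the triple product); one application of Weierstrass' identity to merge two of the three terms; and a separate 2-dissection identity producing $\theta(r^3;q^2)/\theta(r;q^2)$, which is where $d^3/c^3$ comes from.

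As for the fallback:
\begin{enumerate}[(i)]
\item Neither side is quasi-periodic under $c\mapsto qc$, since $x_3^2=cd$ and $d/c$ move by a half period of $\theta(\cdot;q^2)$. The correct period is $c\mapsto q^2c$. Under it both sides are theta functions of degree $20$ in $c$, so you would need vanishing of the left side at $20$ points modulo $q^{2\mathbb{Z}}$, plus a normalisation.
\item Your zero set is wrong. At $d=c$ the factor $\theta(r^3;q^2)/\theta(r;q^2)$ tends to $3$, so the right side is $6c^2\theta(a^2,b^2,a^2b^2c^8/q^2;q^2)/\theta(a,b,abc^4;q^2)\neq0$. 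Correspondingly, $A(u)^3-A(-u)^3-S(u)^3\bigl(A(qu)^3-A(-qu)^3\bigr)$ does not vanish ``by symmetry''.
\item The genuine zeros such as $d=\omega c$ do vanish, but for a different reason: the product over cube roots of unity turns both brackets into instances of Warnaar's identity in base $q^6$, which then coincide.
\item You give no argument for the zeros coming from $\theta(a^2b^2c^4d^4/q^2;q^2)/\theta(abc^2d^2;q^2)$, nor for the constant.
\end{enumerate}
In any case, this is the Liouville-type argument the paper sets out to replace with an elementary one.
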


Kim's proof of \eqref{Kim-iden-1} relies on the theory of multiplicative elliptic
functions, together with a pole-cancellation argument and Liouville's theorem. At the
end of her paper, Kim suggested several directions for further investigation. In
particular, she \cite[p.~13]{Kim2021} remarked that it would be interesting to find
an alternative proof of \eqref{Kim-iden-1} using elementary $q$-series techniques or
combinatorial methods. We first address this question by providing an elementary
$q$-series proof of \eqref{Kim-iden-1}. Our proof combines Warnaar's identity
\eqref{War-gene} with Jacobi's triple product identity and the fundamental theta
identity of Weierstrass.

Very recently, Kim \cite{Kim2026} established further general theta function
identities using the theory of multiplicative elliptic functions, together with a
pole-cancellation argument and Liouville's theorem. As immediate applications, she
obtained several theta function identities previously derived or conjectured by Berndt
and R. Zhou \cite{BZ2015} and by Sandon and Zanello \cite{SZ2014} as special cases.
We next turn to several general theta function identities established by Kim \cite{Kim2026}. In contrast to the approach based on multiplicative elliptic functions
used in \cite{Kim2026}, we shall derive these identities by elementary $q$-series
methods. Together with \eqref{Kim-iden-1}, the proofs presented below provide a common
$q$-series approach to Kim's general theta function identities. We begin with the following two identities.

\begin{theorem}{\textup{\cite[Theorem~5.1]{Kim2026}}}\label{Main-THM-2}
Let $a,\!\:b,\!\:c,\!\:d,\!\:q\in\mathbb{C}\setminus\{0\}$ with $|q|<1$. Then
\begin{align}
\mathscr{D}(x_1,x_2) &+\mathscr{S}(x_1,x_2)\mathscr{D}(qx_1,qx_2)\nonumber\\
 &\quad=\dfrac{2abc^2d}{q}\theta(-a,-b,-d/c,-abcd;q).\label{Kim-iden-2}
\end{align}
\end{theorem}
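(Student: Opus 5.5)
The plan is to reduce \eqref{Kim-iden-2} to two applications of Warnaar's identity \eqref{War-gene}, combined by a bookkeeping argument on products of theta functions.

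\textbf{Step 1: split each $A$ into its even and odd parts in $x$.} By Jacobi's triple product identity, $\theta(-x,-ax;q^2)$ and $\theta(-bx,-abx;q^2)$ are, up to the factor $(q^2;q^2)_\infty^2$, products of two theta series $\sum_n q^{n^2-n}x^n$ and $\sum_n q^{n^2-n}(ax)^n$. The substitution $n+m=N$ turns each such product into a sum of two single theta functions with nome $q^4$, graded by the parity of $N$. This yields a decomposition
\[
A(x)=E(x)+O(x),\qquad A(-x)=E(x)-O(x),
\]
where $E$ and $O$ are the even and odd parts of $A$ in $x$. Each of them is a combination of products of theta functions in the variables $x^2$, $a$, $b$.

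\textbf{Step 2: identify the second term through a $q$-shift.} Write $\mathscr{D}(x_1,x_2)=2\bigl(E(c)O(d)+O(c)E(d)\bigr)$. Next compute how $A(qx)$ relates to $A(x)$. Since $\theta(z;q^2)$ satisfies $\theta(qz;q^2)=\theta(z/q;q^2)$ up to a monomial factor, $A(qx)$ is not a plain multiple of $A(x)$. Instead it equals $\theta(-qx,-aqx,-bqx,-abqx;q^2)$, which is the complementary half of $\theta(\cdot;q)$, because $\theta(z;q)=\theta(z;q^2)\theta(qz;q^2)$.

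Hence the expression
\[
A(x)A(qx)=\theta(-x,-ax,-bx,-abx;q)
\]
is a product over nome $q$. This suggests grouping the two terms of the left-hand side into expressions over nome $q$. The factor $S(x_1)S(x_2)=a^2b^2c^2d^2/q^2$ should arise exactly from rescaling the $q^2$-theta functions of $qx$ into $\theta(\cdot;q)$ form. The sign $+$ in \eqref{Kim-iden-2}, compared with $-$ in \eqref{Kim-iden-1}, should come from the odd number of sign flips.

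\textbf{Step 3: reduce to Warnaar's identity.} The main idea is to rewrite
\[
\mathscr{D}(x_1,x_2)+\mathscr{S}(x_1,x_2)\mathscr{D}(qx_1,qx_2)
\]
as a difference
\[
\theta(-c',-ac',-bc',-abc';q)-\theta(c',ac',bc',abc';q)
\]
with suitable parameters, for instance $c'$ related to $\sqrt{cd}$ or to $d/c$, after applying Weierstrass' fundamental theta identity
\[
\theta(xy,x/y,uv,u/v;q)-\theta(xv,x/v,uy,u/y;q)=\frac{u}{y}\,\theta(yv,y/v,xu,x/u;q)
\]
to convert $A(c)A(d)$-type products (with four theta factors in $c$ and four in $d$) into products in $cd$ and $d/c$. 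Concretely, pair $\theta(-c;q^2)\theta(-d;q^2)$ and similar terms via the substitution $x=\sqrt{cd}$, $y=\sqrt{d/c}$. Then apply \eqref{War-gene} with $(a,b,c)$ replaced by $(a,b,\text{something in } c,d)$. This produces the right-hand side $\frac{2abc^2d}{q}\theta(-a,-b,-d/c,-abcd;q)$. The factor $\theta(-1;q)=2(-q;q)_\infty^2$ coming out of \eqref{War-gene} accounts for the factor $2$.

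\textbf{Main obstacle.} The hard step is Step 3, namely finding the right pairing so that the four products (two from $\mathscr{D}$, two from the shifted $\mathscr{D}$) collapse by Weierstrass' identity into exactly the shape of Warnaar's left-hand side. I would first try it in the specialization $a=b$ or $d=c$, where the right-hand side simplifies to a multiple of $\theta(-1;q)$, to determine the pairing. Then I would check that the monomial factors from the quasi-periodicity $\theta(qz;q)=-z^{-1}\theta(z;q)$ assemble into $abc^2d/q$.

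If the direct pairing fails, the fallback is to fix $a,b,d$ and regard both sides as functions of $c$. One would verify that they satisfy the same functional equation under $c\mapsto qc$, and then compare a few coefficients. These coefficients are obtainable from Jacobi's triple product identity, which is still elementary.
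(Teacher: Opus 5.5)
There is a genuine gap, and it is in your Step~3: the shape you are aiming for cannot occur. Suppose the left side of \eqref{Kim-iden-2} were, up to a monomial, a single Warnaar difference with parameters $(a,b,c')$, where $c'$ depends only on $c,d$. Then by \eqref{War-gene} it would be a monomial times $\theta(-1,-a,-b,-abc'^2;q)$. Matching zeros in $a$ against those of $\theta(-abcd;q)$ forces $c'^2\in cd\,q^{\mathbb{Z}}$. But then the zeros $d\in -c\,q^{\mathbb{Z}}$ of the factor $\theta(-d/c;q)$ on the right of \eqref{Kim-iden-2} have no counterpart. For the same reason, $\theta(-1;q)=2(-q;q)_\infty^2$ does not ``account for the factor $2$'': the target contains no $(-q;q)_\infty^2$. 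That factor has to be absorbed by an additional identity, and this identity is exactly what produces $\theta(-d/c;q)$. It is the $2$-dissection
\[
\theta(-r,-y;q)=(-q;q)_\infty^2\bigl\{\theta(-qy/r,-yr;q^2)+r\,\theta(-y/r,-qyr;q^2)\bigr\},\qquad r=d/c,\ y=abcd,
\]
which follows from Jacobi's triple product by splitting the double sum for $\theta(-r,-y;q)$ according to the parity of $m+n$. Nothing in your proposal leads to a sum of this form, and the Weierstrass pairing that was supposed to do the work is never specified.

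Your Steps~1--2 are nevertheless the right start. They lead to the paper's argument once two product formulas are added.
\begin{enumerate}[(i)]
\item Write $D(x)=A(x)-A(-x)$, so $O=\tfrac12 D$. Warnaar with base $q^2$ gives $D(x)=2x(-q^2;q^2)_\infty^2\theta(-a,-b,-abx^2;q^2)$, and quasi-periodicity then gives $\mathscr{S}(x)D(qx)=D(x)$.
\item Set $M(x)=A(x)+\mathscr{S}(x)A(-qx)=E(x)+\mathscr{S}(x)E(qx)$. By (i), the left side equals $D(c)M(d)+M(c)D(d)$.
\item Your observation $A(x)A(qx)=\theta(-x,-ax,-bx,-abx;q)$ lets you apply Warnaar with base $q$. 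This gives $\mathscr{S}(x)\{A(x)A(qx)-A(-x)A(-qx)\}=D(x)M(x)$. Dividing by $D(x)$ yields $M(x)=(-q;q^2)_\infty^2\mathscr{S}(x)\theta(-aq,-bq,-abqx^2;q^2)$.
\item Substituting, the left side becomes $\frac{2abc^2d}{q}(-q;q)_\infty^2\theta(-a,-b;q)$ times the braced sum above, and the dissection finishes the proof.
\end{enumerate}

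Your fallback can also be completed. The left side does satisfy $f(qc)=\frac{q}{abc^2}f(c)$, it vanishes at $c=-d$ and $c=-1/(abd)$, and at $c=d$ it equals $2D(d)M(d)$, which agrees with the right side. However, both the functional equation and this normalization again need $\mathscr{S}(x)D(qx)=D(x)$ and Warnaar with base $q$, not just ``a few coefficients from Jacobi's triple product.'' You would also still need an argument that an entire function with this functional equation and these zeros is a constant multiple of the right side.
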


\begin{theorem}{\textup{\cite[Theorem~2.1]{Kim2026}}}\label{Main-THM-3}
Let $a,\!\:b,\!\:c,\!\:d,\!\:q\in\mathbb{C}\setminus\{0\}$ with $|q|<1$. Then
\begin{align}
\mathscr{D}(x_1,qx_1,x_2) &-\mathscr{S}(x_2)\mathscr{D}(x_1,qx_1,qx_2)\nonumber\\
 &\quad=-\dfrac{2abcd^2}{q}\dfrac{\theta\big(a^2,b^2,a^2b^2c^4,abd^2q;q^2\big)}
{\theta\big(a,b,abc^2,abc^2q;q^2\big)}.\label{Kim-iden-3}
\end{align}
\end{theorem}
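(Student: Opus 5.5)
The plan is to reduce \eqref{Kim-iden-3} to two applications of Warnaar's identity \eqref{War-gene} with base $q^2$, followed by one Weierstrass three-term theta relation.

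\textbf{Step 1 (one Warnaar step).} Write $T(x)=\theta(x,ax,bx,abx;q^2)$, so that $A(x)=T(-x)$ and $A(-x)=T(x)$. Taking \eqref{War-gene} with $q\to q^2$ and $c\to x$ gives
\[
A(x)-A(-x)=x\,\theta(-1,-a,-b,-abx^2;q^2).
\]
So the difference of a single pair is already a product of theta functions. The quasi-periodicity $\theta(q^2z;q^2)=-z^{-1}\theta(z;q^2)$ then yields
\[
A(qx)=\frac{q}{abx^{2}}\,A(q^{-1}x),
\qquad\text{that is,}\qquad
S(x)A(qx)=A(x/q).
\]
This explains why $S$ appears in the statement.

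\textbf{Step 2 (telescoping the products).} Put $P=A(x_1)A(qx_1)$ and $P^-=A(-x_1)A(-qx_1)$. The left side of \eqref{Kim-iden-3} is
\[
P\bigl(A(x_2)-S(x_2)A(qx_2)\bigr)-P^-\bigl(A(-x_2)-S(x_2)A(-qx_2)\bigr).
\]
By Step 1 this equals
\[
P\bigl(A(x_2)-A(x_2/q)\bigr)-P^-\bigl(A(-x_2)-A(-x_2/q)\bigr).
\]
I will rewrite it in the symmetric form
\[
\tfrac12(P-P^-)\bigl[(A(x_2)+A(-x_2))-(A(x_2/q)+A(-x_2/q))\bigr]
+\tfrac12(P+P^-)\bigl[(A(x_2)-A(-x_2))-(A(x_2/q)-A(-x_2/q))\bigr].
\]
Every difference $A(y)-A(-y)$ in this expression is given by Step 1. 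For $P-P^-$, I will use the fact that the product $A(x)A(qx)$ is itself a single $\theta$ in base $q$:
\[
A(x)A(qx)=\theta(-x,-ax,-bx,-abx;q).
\]
Hence \eqref{War-gene} in base $q$ gives
\[
P-P^-=x_1\,\theta(-1,-a,-b,-abx_1^2;q).
\]

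\textbf{Step 3 (the sums).} The sums $A(y)+A(-y)$ are even in $y$. Via Jacobi's triple product they can be expanded as two-term theta expressions: group the four factors into two pairs, and each pair $\theta(u,v;q^2)$ splits by the standard product formula into even and odd parts in base $q^4$. This produces the numerator factors with arguments $a^2,b^2$ in base $q^2$, since $(a;q^2)_\infty(-a;q^2)_\infty=(a^2;q^4)_\infty$.

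\textbf{Step 4 (Weierstrass).} At this point the left side is a sum of a few products of theta functions. The right side is a quotient, so I will multiply through by the denominator $\theta(a,b,abc^2,abc^2q;q^2)$. The key observation is that
\[
\theta(abc^2,abc^2q;q^2)=\theta(abc^2;q),
\]
which matches the factor $\theta(-abx_1^2;q)$ arising from Step 2 after $c\to -c$ symmetrisation. I will then apply the Weierstrass fundamental identity
\[
\theta(xy,x/y,uv,u/v)-\theta(xv,x/v,uy,u/y)=\frac{u}{y}\,\theta(yv,y/v,xu,x/u)
\]
with parameters chosen so that the two bracketed combinations collapse to a single product. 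A good first choice is $x^2=abc^2d^2$-type parameters matching $abd^2q$. As a sanity check, I will confirm that both sides vanish at $d\to$ the special values, such as $abd^2q=q^{2k}$, and have the same quasi-periodicity under $d\to qd$.

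\textbf{Main obstacle.} I expect Step 4 to be the hardest part: choosing the Weierstrass parameters and tracking the $\pm$ signs and the powers of $q$. If the direct matching fails, the fallback is to treat both sides as functions of $d$. Both satisfy the same functional equation under $d\to qd$ (checked via Step 1), and a function with this quasi-periodicity lies in a space of small dimension. It would then suffice to compare the two sides at two values, namely $d=\pm c$, where the first Warnaar reduction makes the left side explicitly computable. This keeps the argument within elementary $q$-series: Jacobi's triple product for the dimension count via Laurent coefficients.
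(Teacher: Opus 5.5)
\textbf{There is a genuine gap.} The Step~1 identity is false, and the error propagates. Applying $\theta(q^2z;q^2)=-z^{-1}\theta(z;q^2)$ to each of the four factors of $A(qx)$ gives a factor $q/w$ for each $w\in\{x,ax,bx,abx\}$. Hence $A(qx)=\frac{q^4}{a^2b^2x^4}A(x/q)$, so $S(x)A(qx)=\frac{q^3}{abx^2}A(x/q)$, not $A(x/q)$. Write $D(y)=A(y)-A(-y)$. With your substitution, the bracket you multiply by $\tfrac12(P+P^-)$ becomes $D(x_2)-D(x_2/q)=\bigl(1-abx_2^2/q^3\bigr)D(x_2)$. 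This is a spurious nonzero term that no later theta manipulation can remove. What Warnaar's identity plus quasi-periodicity actually gives is $S(x)D(qx)=D(x)$, that is, $A(x)-S(x)A(qx)=A(-x)-S(x)A(-qx)$. This is the idea your plan is missing. The $(P+P^-)$ bracket vanishes identically, and the left side factors as $(P-P^-)\,f(x_2)$ with $f(x)=A(x)-S(x)A(qx)$. Your formula for $P-P^-$ (Warnaar in base $q$, via $A(x)A(qx)=\theta(-x,-ax,-bx,-abx;q)$) is correct and is exactly what the paper uses.

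The second gap is that you never obtain a product formula for $f(x_2)$. Steps~3--4 (Jacobi expansions of $A(y)+A(-y)$, and a Weierstrass substitution with unspecified parameters) do not yet constitute an argument. The fallback also fails as stated. At $d=\pm c$ the left side is $A(c)A(qc)f(\pm c)-A(-c)A(-qc)f(\mp c)$, which involves precisely the unknown $f$, so nothing there is explicitly computable.

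The paper obtains $f$ from Warnaar's identity in base $-q$. Since $\theta(z;-q)=\theta(z;q^2)\theta(-qz;q^2)$, that identity reads $A(x)A(-qx)-A(-x)A(qx)=x\,\theta(-1,-a,-b,-abx^2;-q)$. Using $S(x)D(qx)=D(x)$ and the evenness of $f$, one checks that $S(x)\bigl(A(x)A(-qx)-A(-x)A(qx)\bigr)=-D(x)f(x)$. Dividing by $D(x)=x\,\theta(-1,-a,-b,-abx^2;q^2)$ yields $f(x)=-(q;q^2)_\infty^2S(x)\,\theta(aq,bq,abqx^2;q^2)$. Multiplying by $P-P^-$ and using
\begin{itemize}
\item $\theta(-a;q)\theta(aq;q^2)=\theta(a^2;q^2)/\theta(a;q^2)$ (and likewise for $b$),
\item $\theta(-abc^2;q)=\theta(a^2b^2c^4;q^2)/\theta(abc^2,abc^2q;q^2)$,
\item $(-q;q)_\infty(q;q^2)_\infty=1$,
\end{itemize}
gives the right side directly; no Weierstrass step is needed.

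Your quasi-periodicity idea can also be salvaged if you apply it to $f$ alone rather than to the whole left side. One has $f(qx)=-\frac{q}{abx^2}f(x)$. Moreover $f$ vanishes when $abx^2=q$, because then $S(x)=1$ and $A(qx)=A(x)$. The two-dimensional solution space is spanned by the even $x^2\theta(abqx^2;q^2)$ and the odd $x\theta(abx^2;q^2)$; only the even one vanishes at $abx^2=q$. So this zero forces $f$ to be a constant multiple of $x^2\theta(abqx^2;q^2)$. You would still have to determine that constant.
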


The next general theta function identity is of a form different from those of
\eqref{Kim-iden-1}--\eqref{Kim-iden-3}. We therefore introduce the following two
additional functions:
\begin{align*}
\hat{A}(x) &=\theta\big({-}x,-ax,-bx,-abx^3;q^2\big), \\
\check{A}(x) &=\theta\big({-}x,-ax,-bx,-abx^3/q^2;q^2\big).
\end{align*}

\begin{theorem}{\textup{\cite[Theorem~3.1]{Kim2026}}}\label{Main-THM-4}
Let $a,\!\:b,\!\:c,\!\:d,\!\:q\in\mathbb{C}\setminus\{0\}$ with $|q|<1$. Then
\begin{align}
 &\hat{A}(x_1)\check{A}(qx_1)\hat{A}(x_2)-\hat{A}(-x_1)\check{A}(-qx_1)\hat{A}(-x_2)
\nonumber\\
 &\quad-x_2\mathscr{S}(x_2)\big\{\hat{A}(x_1)\check{A}(qx_1)\check{A}(qx_2)
+\hat{A}(-x_1)\check{A}(-qx_1)\check{A}(-qx_2)\big\}\nonumber\\
 &\qquad=-\dfrac{2abd^3}{q}\theta\big({-}ac^2,-bc^2,-abc^2;q\big)
\theta\big(ad^2q,bd^2q,abd^2q;q^2\big).\label{Kim-iden-4}
\end{align}
\end{theorem}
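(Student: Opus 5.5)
We prove \eqref{Kim-iden-4} with the same toolkit as Theorems~\ref{Main-THM-1}--\ref{Main-THM-3}: Jacobi's triple product identity, Warnaar's identity \eqref{War-gene}, and Weierstrass' fundamental theta identity. The plan is to reduce the left-hand side to a sum of two products of theta functions and then collapse that sum with a single three-term relation. Throughout, $x_1=c$ and $x_2=d$.

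\textbf{Step 1 (fuse the $x_1$-factors).} The factors $\hat{A}(\pm c)\check{A}(\pm qc)$ involve only $c$, and they combine with base $q^2$ into base $q$. Indeed,
\[
\theta(-c;q^2)\theta(-qc;q^2)=\theta(-c;q),
\]
and likewise with $ac$ and $bc$. For the remaining parts, note that $\theta(-abc^3;q^2)$ and $\theta(-abq^3c^3/q^2;q^2)=\theta(-abqc^3;q^2)$ combine to $\theta(-abc^3;q)$. Hence
\[
\hat{A}(c)\check{A}(qc)=\theta(-c,-ac,-bc,-abc^3;q),
\]
and similarly $\hat{A}(-c)\check{A}(-qc)=\theta(c,ac,bc,abc^3;q)$. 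Call these $P_+$ and $P_-$. The left-hand side then becomes
\[
P_+\bigl(\hat{A}(d)-dS(d)\check{A}(qd)\bigr)-P_-\bigl(\hat{A}(-d)+dS(d)\check{A}(-qd)\bigr).
\]
The extra factor $x_2$ appears because the fourth argument of $\hat A$ is cubic.

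\textbf{Step 2 (simplify the $d$-brackets).} Using $\theta(x;q^2)=-x\,\theta(xq^2;q^2)$, I express $\check{A}(qd)$ through arguments $-qd,-aqd,-bqd,-abqd^3$. The aim is to check that $dS(d)\check{A}(qd)$ equals $\theta(-d,-ad,-bd;q^2)$ times a single shifted theta factor, up to sign. If so, each bracket collapses to
\[
\theta(\mp d,\mp ad,\mp bd;q^2)\,\bigl[\theta(-abd^3;q^2)\pm(\cdots)\bigr],
\]
or else pairs naturally with \eqref{War-gene}. If the shift does not align, I will instead expand $\hat{A}(d)$ and $\check{A}(qd)$ in even and odd parts through the Jacobi triple product, dissecting $\theta(z;q)$ into $q^2$-pieces. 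I then match terms by parity, exactly as in the proof of Theorem~\ref{Main-THM-3}.

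\textbf{Step 3 (apply Warnaar's identity).} On the $c$-side, I apply \eqref{War-gene} with base $q$ and $(a,b,c)\mapsto(a,b,c)$, modified so that the fourth argument $abc^2$ becomes $abc^3$. Where this does not fit directly, I rewrite $P_+\pm P_-$ via the Weierstrass identity
\[
\theta(xy,x/y,uv,u/v;q)-\theta(xv,x/v,uy,u/y;q)=\frac{u}{y}\,\theta(yv,y/v,xu,x/u;q).
\]
The parameters are chosen so that $\{xy,x/y,uv,u/v\}=\{-c,-ac,-bc,-abc^3\}$. I also use $\theta(-ac^2,-bc^2,-abc^2;q)$ as the target. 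The target suggests taking $xu$, $x/u$ and $yv$ to be among $-ac^2,-bc^2,-abc^2$; for example $x^2=ac\cdot bc\cdot(\ldots)$. I will solve for these parameters explicitly.

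\textbf{Step 4 (assemble and normalize).} Combining the $c$-part and the $d$-part should yield the product $\theta(-ac^2,-bc^2,-abc^2;q)\,\theta(ad^2q,bd^2q,abd^2q;q^2)$, up to the prefactor $-2abd^3/q$. I then check the constant by comparing leading terms, for instance by letting $d\to0$ after factoring out powers of $d$.

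\textbf{Main obstacle.} The hardest step is arranging the four-term combination in Step~2 into a difference of the form treated by the Weierstrass identity. In particular, the cross terms $P_\pm$ must decouple. If that fails, I will treat both sides as functions of $d$ satisfying the same quasi-periodicity under $d\mapsto qd$. I then verify agreement at enough points, such as $d=\pm i c$-type specializations where one side vanishes, and that verification can itself be done with \eqref{War-gene}.
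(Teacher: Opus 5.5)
Your Step~1 is correct. Your regrouping of the left-hand side as $P_{+}\bigl(\hat A(d)-dS(d)\check A(qd)\bigr)-P_{-}\bigl(\hat A(-d)+dS(d)\check A(-qd)\bigr)$ is exactly the right one. The gap is the $d$-side. You correctly flag it as the crux but do not resolve it, and the mechanism proposed in Step~2 cannot work. The first three arguments of $\check A(qd)$ are $-qd,-aqd,-bqd$, which sit a half-period (for base $q^2$) away from $-d,-ad,-bd$. So no use of $\theta(x;q^2)=-x\,\theta(q^2x;q^2)$ produces a common factor $\theta(-d,-ad,-bd;q^2)$, and in any case the target $\theta(ad^2q,bd^2q,abd^2q;q^2)$ contains no such piece.

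Two genuine identities are needed. First, $\hat A(d)+\hat A(-d)=dS(d)\bigl(\check A(qd)-\check A(-qd)\bigr)$. This makes the two brackets exact negatives of each other, so the left-hand side equals $(P_{+}+P_{-})\bigl(\hat A(d)-dS(d)\check A(qd)\bigr)$. It is \eqref{War-gene} in base $q^2$ plus quasi-periodicity, i.e.\ $S(x)\bigl(A(qx)-A(-qx)\bigr)=A(x)-A(-x)$, transported by $(a,b,x)\mapsto(ad^2,bd^2,1/d)$. That substitution sends $A(x)\mapsto d^{-1}\hat A(d)$, $A(-x)\mapsto -d^{-1}\hat A(-d)$, $A(\pm qx)\mapsto\check A(\pm qd)$ and $S(x)\mapsto S(d)$. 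Second, you need the closed form
\[
\hat A(d)-dS(d)\check A(qd)=-dS(d)\,(q;q^2)_\infty^2\,\theta\bigl(ad^2q,bd^2q,abd^2q;q^2\bigr).
\]
This is not a rearrangement of factors but a Warnaar-type evaluation, obtained in four moves:
(i) apply \eqref{War-gene} in base $-q$, i.e.\ $A(x)A(-qx)-A(-x)A(qx)=2x(q;-q)_\infty^2\,\theta(-a,-b,-abx^2;-q)$;
(ii) observe, using the first identity, that $S(x)$ times this left side equals $\bigl(A(x)-A(-x)\bigr)\bigl(S(x)A(-qx)-A(-x)\bigr)$;
(iii) divide by the base-$q^2$ evaluation of $A(x)-A(-x)$;
(iv) make the same substitution.
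This is precisely the ingredient behind Theorem~\ref{Main-THM-3}. Neither of your fallbacks replaces it. The parity dissection is unspecified. Matching quasi-periodicity and checking finitely many points is essentially Kim's non-elementary argument and would require a zero count you do not supply.

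On the $c$-side you have the right tool but not the right specialization. Warnaar's fourth argument is $abc$, and the choice that fits is $(a,b,c)\mapsto(ac^2,bc^2,1/c)$ in base $q$. Since $\theta(-1/c;q)=c^{-1}\theta(-c;q)$ and $\theta(1/c;q)=-c^{-1}\theta(c;q)$, this yields the \emph{sum} $P_{+}+P_{-}=\theta(-1,-ac^2,-bc^2,-abc^2;q)=2(-q;q)_\infty^2\,\theta(-ac^2,-bc^2,-abc^2;q)$. A sum is exactly what the opposite signs of the $d$-brackets call for. Your Weierstrass variant would reproduce this but is unnecessary. With both evaluations in hand, $(q;q^2)_\infty(-q;q)_\infty=1$ and $dS(d)=abd^3/q$ give the right-hand side of \eqref{Kim-iden-4} exactly, so no constant needs to be fixed by a limit $d\to0$.
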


The remainder of this paper is organized as follows. In Section~\ref{Sect:EP-1}, we
first establish an auxiliary identity and then give an elementary proof of Theorem
\ref{Main-THM-1}. Section \ref{Sect:EP-2} is devoted to the elementary proofs of
Theorems \ref{Main-THM-2}--\ref{Main-THM-4}.

\section{Elementary proof of Theorem~\ref{Main-THM-1}}\label{Sect:EP-1}

In this section, we provide an elementary $q$-series proof of Theorem~\ref{Main-THM-1}.
The proof consists of two main steps. First, using Warnaar's identity together with
elementary theta function transformations, we transform the left-hand side of
\eqref{Kim-iden-1} into a more tractable expression involving theta functions. We then
evaluate the resulting expression by means of Jacobi's triple product identity and
Weierstrass' fundamental theta identity. This ultimately yields \eqref{Kim-iden-1}.

\subsection{An auxiliary identity}\label{Subs:FS}
We first establish an auxiliary identity that will be used in the proof of
Theorem~\ref{Main-THM-1}.

\begin{lemma}\label{Fund-Lem-1}
Let
\begin{align}
H(X,Y)=\theta\big(qX;q^2\big)\theta\big({-}qY;q^2\big)+\theta\big({-}qX;q^2)
\theta\big(qY;q^2\big).\label{Def-H}
\end{align}
Then
\begin{align}
\mathscr{D}\big(x_1^2,x_2^2,x_3^2\big) &-\mathscr{S}\big(x_1^2,x_2^2,x_3^2\big)
\mathscr{D}\big(qx_1^2,qx_2^2,qx_3^2\big)\nonumber\\
 &\qquad\qquad=-\dfrac{c^2z^2}{q^2}\dfrac{\theta\big(a^2,b^2;q^2\big)}{\theta(a,b;q^2)}
C(r,z),\label{Fund-iden-1}
\end{align}
where $r=d/c$, $z=abc^2d^2$, and
\begin{align}
C(r,z) &=r^2\theta\big({-}z/r^2;q^2\big)H\big(zr^2,z\big)+\theta\big({-}zr^2;q^2\big)
H\big(z,z/r^2\big)\nonumber\\
 &\quad+r\!\:\theta\big({-}z;q^2\big)H\big(z/r^2,zr^2\big).\label{Def-C}
\end{align}
\end{lemma}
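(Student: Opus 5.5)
Apply Warnaar's identity \eqref{War-gene} with base $q^2$ to each of the six differences $A(u)\cdot(\ldots)-A(-u)\cdot(\ldots)$ hidden in $\mathscr{D}$. The key observation is that $A(x)=\theta(-x,-ax,-bx,-abx;q^2)$ has exactly the shape of the left side of \eqref{War-gene}, with $c\mapsto x$ and $q\mapsto q^2$. Hence
\[
A(x)-A(-x)=x\,\theta\big({-}1,-a,-b,-abx^2;q^2\big).
\]
It is also natural to write $A(x)+A(-x)$ through the companion even part.

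For three factors we use the algebraic splitting
\[
\prod_{j=1}^3 A(u_j)-\prod_{j=1}^3 A(-u_j)=\tfrac14\sum \pm (A(u_1)\pm A(-u_1))(A(u_2)\pm A(-u_2))(A(u_3)\pm A(-u_3)),
\]
keeping only the terms with an odd number of ``minus'' factors. Each minus factor is given by Warnaar; it produces the common factor $\theta(-1,-a,-b;q^2)$ and a single theta $\theta(-abu_j^2;q^2)$.

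Rather than using plus factors directly, I would use the product formula $\theta(x;q^2)\theta(-x;q^2)=\theta(x^2;q^4)$. The factors $\theta(-a,-b;q^2)$ and $\theta(a,b;q^2)$ then combine to give the prefactor $\theta(a^2,b^2;q^2)/\theta(a,b;q^2)$ in \eqref{Fund-iden-1}. In parallel, $\theta(-1;q^2)$ with the remaining pieces should collapse, via Jacobi's triple product, into the functions $\theta(\pm qX;q^2)$ appearing in $H$.

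The second ingredient is the shift $u\mapsto qu$ in the subtracted term. Using the quasi-periodicity $\theta(q^2x;q^2)=-x^{-1}\theta(x;q^2)$, I expect $\mathscr{S}(x_1^2,x_2^2,x_3^2)\mathscr{D}(qx_1^2,\dots)$ to be expressible with arguments $-abu^2q$, i.e.\ of the form $\theta(\pm q z';q^2)$. The factor $S(x)=abx^2/q$ is exactly the normalization needed for this, and it cancels the multiplier from shifting $abx^2$ by $q^2$.

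Substituting $u_1=c^2$, $u_2=d^2$, $u_3=cd$, the arguments $ab u_j^2$ become $z/r^2$, $zr^2$ and $z$, where $z=abc^2d^2$ and $r=d/c$. So each of the three ``one-Warnaar-factor'' groupings pairs $\theta(-z\cdot r^{\pm2\text{ or }0};q^2)$ with a two-factor combination of the other two indices. That combination, from the unshifted and shifted parts together, should be $H(X,Y)$ with $\{X,Y\}$ the other two arguments. The prefactors $r^2$, $1$, $r$ arise from the $x$-prefactor in Warnaar's identity, i.e.\ $u_j\in\{c^2,d^2,cd\}$ after extracting the overall factor $c^2$. The triple-minus term should either be absorbed in the same way or cancel between the two halves of the left side.

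The main obstacle is the bookkeeping in the plus factors: showing that $A(u)+A(-u)$, combined with the shifted counterpart from the $\mathscr{S}\mathscr{D}(q\cdot)$ term, really reduces to the two-term combination $H$. My first attempt would be to dissect $A(x)$ via Jacobi's triple product into even and odd parts in $x$. Concretely, I would write $\theta(-x,-abx;q^2)$ and $\theta(-ax,-bx;q^2)$ as $q^4$-series products $\theta(\cdot;q^4)$ with arguments $abx^2$ or $abx^2q^2$. The same products then appear in both halves with opposite signs from $\mathscr{S}$, leaving the sum-of-cross-products form \eqref{Def-H} and the overall constant $-c^2z^2/q^2$.
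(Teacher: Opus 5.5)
There is a genuine gap: the step you call the main obstacle is the entire content of the lemma, and your proposal does not contain the idea that resolves it. Your outer bookkeeping is fine, and is in fact equivalent to the paper's. Put $P(x)=A(x)+A(-x)$, $D(x)=A(x)-A(-x)$, $Q(x)=\mathscr{S}(x)(A(qx)+A(-qx))$, and write $P_j=P(u_j)$, etc. The identity $\mathscr{S}(x)D(qx)=D(x)$ (Warnaar plus quasi-periodicity) makes the triple-$D$ terms cancel outright. The left side of \eqref{Fund-iden-1} is then $\tfrac14\sum_{\mathrm{cyc}}D_i\{P_jP_k-Q_jQ_k\}$. But $P$ and $Q$ are not, in general, theta products, so nothing collapses to $H$ until you regroup them as
$M=\tfrac12(P+Q)=A(x)+\mathscr{S}(x)A(-qx)$ and $N=\tfrac12(Q-P)=\mathscr{S}(x)A(-qx)-A(-x)$. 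This gives $P_jP_k-Q_jQ_k=-2(M_jN_k+N_jM_k)$. The decisive fact is that $M$ and $N$ \emph{are} single products. Warnaar's identity \eqref{War-gene} with base $q$ and with base $-q$ evaluates $A(x)A(qx)-A(-x)A(-qx)$ and $A(x)A(-qx)-A(-x)A(qx)$. These equal $D(x)M(x)/\mathscr{S}(x)$ and $D(x)N(x)/\mathscr{S}(x)$, so dividing by $D(x)$ gives $M(x)=(-q;q^2)_\infty^2\mathscr{S}(x)\theta(-aq,-bq,-abqx^2;q^2)$ and $N(x)=(q;q^2)_\infty^2\mathscr{S}(x)\theta(aq,bq,abqx^2;q^2)$. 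Then $M_jN_k+N_jM_k$ is visibly a constant times $H(abu_j^2,abu_k^2)$. Your 2-dissection plan could be pushed through; the mixed terms in $P_jP_k-Q_jQ_k$ do cancel. But what survives becomes $H$ only after further dissection identities that recognise $\tfrac12(P\pm Q)$ as products, i.e.\ after rediscovering $M$ and $N$.

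Several of your predictions about where the factors come from are also wrong, which confirms the mechanism is not yet in hand.
First, the prefactor cannot come from $\theta(-a;q^2)$ via $\theta(x;q^2)\theta(-x;q^2)=\theta(x^2;q^4)$. That only yields $\theta(a^2;q^4)/\theta(a;q^2)$. The missing factor $\theta(a^2q^2;q^4)=\theta(aq;q^2)\theta(-aq;q^2)$, needed to reach $\theta(a^2;q^2)$, is supplied precisely by $N$ and $M$.
Second, the constant $\theta(-1;q^2)=2(-q^2;q^2)_\infty^2$ does not turn into $\theta(\pm qX;q^2)$. It cancels against $(-q;q^2)_\infty^2(q;q^2)_\infty^2=(q^2;q^4)_\infty^2$ from $MN$.
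Third, the weights $r^2,1,r$ are not the Warnaar prefactors $u_i\in\{c^2,d^2,cd\}$. Those would attach $1,r^2,r$ to $\theta(-z/r^2;q^2)$, $\theta(-zr^2;q^2)$, $\theta(-z;q^2)$, the wrong way round. The true weight of the $i$th group is $u_i\mathscr{S}(u_j)\mathscr{S}(u_k)=(ab/q)^2(u_1u_2u_3)^2/u_i$, which equals $c^2z^2q^{-2}$ times $r^2$, $1$, $r$ for $i=1,2,3$. It comes from the $\mathscr{S}$ factors inside $M$ and $N$.
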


\begin{proof}
For any $x\neq0$, define
\begin{align}
D(x) &=A(x)-A(-x),\label{Def-D-x}\\
M(x) &=A(x)+\mathscr{S}(x)A(-qx),\label{Def-M-x}\\
N(x) &=\mathscr{S}(x)A(-qx)-A(-x).\label{Def-N-x}
\end{align}
We first derive explicit expressions for these functions. Replacing $q$ by $q^2$ and
setting $c=x$ in \eqref{War-gene}, we find that
\begin{align}
D(x)=2x(-q^2;q^2)_\infty^2\!\:\theta\big({-}a,-b,-abx^2;q^2\big).\label{D-x-exp}
\end{align}
We shall also use the elementary transformation
\begin{align}
\theta\big(q^2x;q^2\big)=-x^{-1}\theta\big(x;q^2\big),\label{Theta-prop}
\end{align}
which follows immediately from the definition of the theta function. Replacing $x$ by
$qx$ in \eqref{D-x-exp}, multiplying both sides by $\mathscr{S}(x)$ and then applying
\eqref{Theta-prop}, we deduce that
\begin{align*}
\mathscr{S}(x)D(qx) &=2qx(-q^2;q^2)_\infty^2\!\:\mathscr{S}(x)\!\:\theta
\big({-}a,-b,-abq^2x^2;q^2\big)
\nonumber\\
 &=2x(-q^2;q^2)_\infty^2\!\:\theta\big({-}a,-b,-abx^2;q^2\big).
\end{align*}
Consequently,
\begin{align}
\mathscr{S}(x)\big(A(qx)-A(-qx)\big)=D(x).\label{Aux-iden-1}
\end{align}

Next, applying \eqref{War-gene} with bases $q$ and $-q$, respectively, yields
\begin{align}
A(x)A(qx)-A(-x)A(-qx) &=2x(-q;q)_\infty^2\!\:\theta\big({-}a,-b,-abx^2;q\big),
\label{Sub-iden-1}\\
A(x)A(-qx)-A(-x)A(qx) &=2x(q;-q)_\infty^2\!\:\theta\big({-}a,-b,-abx^2;-q\big).
\label{Sub-iden-2}
\end{align}
We now rewrite the left-hand sides of \eqref{Sub-iden-1} and \eqref{Sub-iden-2} in
terms of $D$, $M$ and $N$. Indeed,
\begin{align}
 &\mathscr{S}(x)\big(A(x)A(qx)-A(-x)A(-qx)\big)\nonumber\\
 &\qquad\qquad\quad=\mathscr{S}(x)\big(A(qx)-A(-qx)\big)A(x)\nonumber\\
 &\qquad\qquad\qquad\qquad\qquad+\mathscr{S}(x)\big(A(x)-A(-x)\big)A(-qx).
\label{Der-step-1}
\end{align}
It follows from \eqref{Def-D-x}, \eqref{Def-M-x} and \eqref{Aux-iden-1} that
\begin{align}
\mathscr{S}(x)\big(A(x)A(qx)-A(-x)A(-qx)\big)=D(x)M(x).\label{Rel-iden-1}
\end{align}
Similarly,
\begin{align}
\mathscr{S}(x)\big(A(x)A(-qx)-A(-x)A(qx)\big)=D(x)N(x).\label{Rel-iden-2}
\end{align}
Combining \eqref{Sub-iden-1}, \eqref{Sub-iden-2}, \eqref{Rel-iden-1} and
\eqref{Rel-iden-2}, and simplifying the resulting theta products, we obtain that
\begin{align}
M(x) &=(-q;q^2)_\infty^2\!\:\mathscr{S}(x)\!\:\theta\big({-}aq,-bq,-abqx^2;q^2\big),
\label{Aux-iden-2}\\
N(x) &=(q;q^2)_\infty^2\!\:\mathscr{S}(x)\!\:\theta\big(aq,bq,abqx^2;q^2\big).
\label{Aux-iden-3}
\end{align}

The definitions \eqref{Def-D-x}--\eqref{Def-N-x}, together with \eqref{Aux-iden-1},
give
\begin{align}
A(x) &=\dfrac{M(x)+D(x)-N(x)}{2},\label{Rel-iden-3}\\
A(-x) &=\dfrac{M(x)-D(x)-N(x)}{2},\label{Rel-iden-4}\\
\mathscr{S}(x)A(qx) &=\dfrac{M(x)+D(x)+N(x)}{2},\label{Rel-iden-5}\\
\mathscr{S}(x)A(-qx) &=\dfrac{M(x)-D(x)+N(x)}{2}.\label{Rel-iden-6}
\end{align}
Substituting \eqref{Rel-iden-3}--\eqref{Rel-iden-6} into the left-hand side of
\eqref{Fund-iden-1}, we expand the products and collect like terms. Except for the
following six terms, all other terms cancel pairwise:
\begin{align}
 &\mathscr{D}\big(x_1^2,x_2^2,x_3^2\big)-\mathscr{S}\big(x_1^2,x_2^2,x_3^2\big)
\mathscr{D}\big(qx_1^2,qx_2^2,qx_3^2\big)\nonumber\\
 &\qquad\qquad=-\dfrac{1}{2}\sum_{(i,j,k)\in S}D\big(x_i^2\big)\big\{N\big(x_j^2\big)
M\big(x_k^2\big)+M\big(x_j^2\big)N\big(x_k^2\big)\big\},\label{LHS-sum-exp}
\end{align}
where
\begin{align*}
S=\{(1,2,3),\,(2,3,1),\,(3,1,2)\}.
\end{align*}
Finally, substituting \eqref{D-x-exp}, \eqref{Aux-iden-2}, and \eqref{Aux-iden-3} into
\eqref{LHS-sum-exp}, and collecting the common factors, we obtain a linear combination
of theta products. Using the elementary transformations of theta functions, this
expression simplifies to the right-hand side of \eqref{Fund-iden-1}. Hence,
\eqref{Fund-iden-1} follows.

This completes the proof of Lemma~\ref{Fund-Lem-1}.
\end{proof}

\subsection{Elementary proof of \texorpdfstring{\eqref{Kim-iden-1}}{}}
We now complete the elementary proof of \eqref{Kim-iden-1} by establishing the following
auxiliary identity.

\begin{lemma}
We have
\begin{align}
C(r,z)=2\dfrac{\theta\big(r^3,z^2;q^2\big)}{\theta\big(r,z;q^2\big)}.\label{Fund-iden-2}
\end{align}
\end{lemma}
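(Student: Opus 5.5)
We will prove \eqref{Fund-iden-2} by first simplifying $H$ and then reducing to a three-term identity of Weierstrass type. The first step is to evaluate $H(X,Y)$ in closed form. Writing $\theta(\pm qX;q^2)$ via Jacobi's triple product as $\theta(\pm qX;q^2)=(q^2;q^2)_\infty^{-1}\sum_n (\mp1)^n q^{n^2}X^n$, the product $\theta(qX;q^2)\theta(-qY;q^2)$ becomes a double sum $\sum_{m,n}(-1)^m q^{m^2+n^2}X^mY^n$. Adding the term with the roles of the signs swapped kills the terms with $m+n$ odd and doubles those with $m+n$ even. Setting $m=s+t$, $n=s-t$ (so $m^2+n^2=2s^2+2t^2$), we obtain
\begin{align*}
H(X,Y)=\frac{2}{(q^2;q^2)_\infty^2}\sum_{s,t}(-1)^{s+t}q^{2s^2+2t^2}(XY)^s(X/Y)^t
=2\frac{(q^4;q^4)_\infty^2}{(q^2;q^2)_\infty^2}\,\theta\big(q^2XY;q^4\big)\theta\big(q^2X/Y;q^4\big).
\end{align*}
This is the usual ``quintuple/product-to-sum in base $q^4$'' splitting. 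Applying it to the three arguments, $H(zr^2,z)$, $H(z,z/r^2)$ and $H(z/r^2,zr^2)$ respectively produce the factors $\theta(q^2z^2r^2;q^4)\theta(q^2r^2;q^4)$, $\theta(q^2z^2/r^2;q^4)\theta(q^2r^2;q^4)$ and $\theta(q^2z^2;q^4)\theta(q^2/r^4;q^4)$.

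Next we normalise the right-hand side. Multiplying \eqref{Fund-iden-2} by $\theta(r,z;q^2)$, and using $\theta(z^2;q^2)/\theta(z;q^2)=\theta(-z;q^2)\,(\text{constant})$ from $\theta(z^2;q^4)=\theta(z,-z;q^2)$-type duplication, we convert everything to products of theta functions in $z$ and $r$. The terms $\theta(-z/r^2;q^2)\theta(q^2z^2r^2;q^4)$ and their analogues should also be rewritten by duplication so that each summand on the left becomes (constant)$\cdot$(a function of $r$)$\cdot$(a product of theta functions in $z$ with shifts by $r^{\pm2}$). The target is a three-term relation of the form
\[
\sum_{\text{cyc}} \theta(xy,x/y,uv,u/v;p)\ \text{-type}=0,
\]
which is exactly Weierstrass' fundamental theta identity $\theta(xy,x/y,uv,u/v)-\theta(xv,x/v,uy,u/y)=\tfrac{u}{y}\theta(yv,y/v,xu,x/u)$. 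We will then choose $x,y,u,v$ as monomials in $z,r$ (with base $q^2$ or $q^4$) so that the three terms of $C(r,z)$ minus the right-hand side match the three terms of Weierstrass' identity, with the right-hand side $2\theta(r^3,z^2)/\theta(r,z)$ accounted for via $\theta(r^3;q^2)/\theta(r;q^2)$ expressed through the cubic factorisation $\theta(r^3;q^6)$-style identities (equivalently, writing $\theta(r^3;q^2)/\theta(r;q^2)=\theta(\omega r,\omega^2 r;q^2)$).

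The main obstacle is this matching step: the three summands of $C$ carry different prefactors $r^2,1,r$ and mix bases $q^2$ and $q^4$, so finding the right specialisation of Weierstrass' identity (possibly applied twice, or combined with $\theta(x;q^2)=\theta(x,q^2x;q^4)$) is delicate. As a fallback we will argue analytically in $z$: both sides, as functions of $z$, satisfy the same quasi-periodicity under $z\mapsto q^2z$ (checked via \eqref{Theta-prop}), so their difference lies in a finite-dimensional space of theta functions; verifying agreement at enough specialisations (e.g.\ $z=-r^2$, $z=-1$, $z=-r^{-2}$, where two of the three terms of $C$ vanish) then forces equality. Those point evaluations reduce to one-variable product identities checked by \eqref{Theta-prop} and the formula for $H$ above.
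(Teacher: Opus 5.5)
Your evaluation of $H$ is correct and is the paper's \eqref{H-theta-exp}, and so are the three specialisations (note $\theta(q^2/r^4;q^4)=\theta(q^2r^4;q^4)$). After that, though, the proposal stops where the work begins.

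In the main route you never specify the instance of \eqref{Wei-iden}. Moreover, the auxiliary identity you plan to use is false. Since $(r;q^2)_\infty(\omega r;q^2)_\infty(\omega^2 r;q^2)_\infty=(r^3;q^6)_\infty$, one has $\theta(\omega r,\omega^2 r;q^2)=\theta(r^3;q^6)/\theta(r;q^2)$, not $\theta(r^3;q^2)/\theta(r;q^2)$. The paper takes \eqref{Wei-iden} in base $q^4$ with $(x,y,u,v)=(zr^2q,zq/r^2,zq,-1/q)$. This gives \eqref{Der-iden-2}, which merges the two summands with prefactors $r^2$ and $1$ into $\frac{\theta(r^4;q^4)}{\theta(r^2;q^4)}\theta(-z;q^2)\theta(z^2q^2;q^4)$. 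Because $\theta(-z;q^2)\theta(z^2q^2;q^4)=\theta(z^2;q^2)/\theta(z;q^2)$, all $z$-dependence then factors out. What remains is the one-variable identity \eqref{Key-iden-1}, equivalently \eqref{Key-iden-2}, which the paper proves by a separate parity dissection of $\theta(-r;q^2)\theta(r^3;q^2)$.

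Your fallback is sound in outline. Every summand of $C$ and the right-hand side pick up the same factor $-q^{-2}z^{-3}$ under $z\mapsto q^2z$, so the difference lies in a three-dimensional space. However, your reason the point checks are easy is wrong: for generic $r$, only \emph{one} summand vanishes at each of $z=-r^2,-1,-r^{-2}$, not two. At $z=-1$ the two surviving terms cancel and the right side is $0$, so that case is harmless. At $z=-r^2$, however, you must prove
\[
(-q^2;q^2)_\infty^2\big\{\theta(r^4;q^2)\theta(q^2r^2;q^4)^2+r\,\theta(r^2;q^2)\theta(q^2r^4;q^4)^2\big\}=\frac{\theta(r^3,r^4;q^2)}{\theta(r,-r^2;q^2)}.
\]
After the duplication formulas this is exactly \eqref{Key-iden-2}, one of the two substantive steps of the paper, and it is not a one-term product check. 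It can be derived from your $H$-formula, but only by combining that formula with its shift $X\mapsto q^2X$ via \eqref{Theta-prop}. That yields the full dissection $\theta(a,b;q^2)=(-q^2;q^2)_\infty^2\{\theta(-ab,-q^2a/b;q^4)-a\,\theta(-q^2ab,-b/a;q^4)\}$, and you then set $(a,b)=(-r,r^3)$. You need to supply this step.

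You also have to justify that three point values determine an element of the space. This holds for generic $r$: a nonzero element has exactly three zeros modulo $q^{2\mathbb{Z}}$, and their product lies in $q^{2\mathbb{Z}}$, whereas $(-r^2)(-1)(-r^{-2})=-1$. The check at $z=-r^{-2}$ then follows from the one at $z=-r^2$ via $C(1/r,z)=r^{-2}C(r,z)$.

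With these repairs your route is a valid alternative, in which a Liouville-type argument in $z$ replaces Weierstrass' identity. In fact, the easy vanishing of $C$ at $z=-1,\pm q$ already gives $C(r,z)=\lambda(r)\,\theta(z^2;q^2)/\theta(z;q^2)$, which is the content of \eqref{Der-iden-2}. In either route, though, the $r$-identity \eqref{Key-iden-2} is the essential ingredient that fixes $\lambda(r)$, and your proposal leaves it out.
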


\begin{proof}
We first derive a theta function representation for $H(X,Y)$. Namely, we claim that
\begin{align}
H(X,Y)=2(-q^2;q^2)_\infty^2\!\:\theta\big(q^2XY,q^2X/Y;q^4\big).\label{H-theta-exp}
\end{align}
We use Jacobi's celebrated triple product identity \cite[p.~15, Eq.~(1.6.1)]{GR2004}
\begin{align}
(x;q)_\infty(q/x;q)_\infty(q;q)_\infty=\sum_{n=-\infty}^\infty(-1)^nx^nq^{n(n-1)/2}.
\label{JTPI}
\end{align}
Upon taking $x=qX$ and $x=-qX$, respectively, in \eqref{JTPI}, we find that
\begin{align}
(q^2;q^2)_\infty\!\:\theta(qX;q^2) &=\sum_{n=-\infty}^\infty(-1)^nX^nq^{n^2},
\label{Prod-sum-1}\\
(q^2;q^2)_\infty\!\:\theta(-qX;q^2) &=\sum_{n=-\infty}^\infty X^nq^{n^2}.
\label{Prod-sum-2}
\end{align}
Substituting \eqref{Prod-sum-1} and \eqref{Prod-sum-2} into the definition
\eqref{Def-H}, we deduce that
\begin{align}
(q^2;q^2)_\infty^2\!\:H(X,Y)=\sum_{m=-\infty}^\infty\sum_{n=-\infty}^\infty
\big((-1)^{m}+(-1)^n\big)X^{m}Y^{n}q^{m^2+n^2}.\label{H-exp-1}
\end{align}
The factor $(-1)^m+(-1)^n$ vanishes whenever $m$ and $n$ have opposite parity. Thus,
only the terms with $m\equiv n\pmod 2$ contribute. For such pairs $(m,n)$, write
$m=s+t$, $n=s-t$, where $s,\!\:t\in\mathbb{Z}$. This change of variables is a bijection
between the pairs $(m,n)$ with $m\equiv n\pmod{2}$ and the
pairs $(s,t)\in\mathbb{Z}^2$. Consequently, \eqref{H-exp-1} becomes
\begin{align}
(q^2;q^2)_\infty^2 &\!\:H(X,Y)\nonumber\\
 &=2\sum_{s=-\infty}^\infty\big({-}q^2XY\big)^{s}q^{2s(s-1)}\sum_{t=-\infty}^\infty
\big({-}q^2X/Y\big)^tq^{2t(t-1)}.\label{H-exp-2}
\end{align}
Applying Jacobi's triple product identity \eqref{JTPI} to each of the two single sums
in \eqref{H-exp-2}, we obtain that
\begin{align*}
H(X,Y) &=2\dfrac{(q^4;q^4)_\infty^2}{(q^2;q^2)_\infty^2}\!\:\theta\big(q^2XY,q^2X/Y;
q^4\big)\nonumber\\
 &=2(-q^2;q^2)_\infty^2\!\:\theta\big(q^2XY,q^2X/Y;q^4\big).
\end{align*}
This proves \eqref{H-theta-exp}.

We next establish the identity
\begin{align}
(-q^2;q^2)_\infty^2\Bigg\{\theta\big(r^2q^2;q^4\big)\dfrac{\theta\big(r^4;q^4\big)}
{\theta\big(r^2;q^4\big)}+r\!\:\theta\big(r^4q^2;q^4\big)\Bigg\}
=\dfrac{\theta\big(r^3;q^2\big)}{\theta\big(r;q^2\big)}.\label{Key-iden-1}
\end{align}
Since $\theta\big(r;q^2\big)\theta\big({-}r;q^2\big)=\theta\big(r^2;q^4\big)$,
\eqref{Key-iden-1} is equivalent to
\begin{align}
 &\theta\big({-}r;q^2\big)\theta\big(r^3;q^2\big)\nonumber\\
 &\qquad=(-q^2;q^2)_\infty^2\big\{\theta\big(r^2q^2;q^4\big)\theta\big(r^4;q^4\big)
+r\!\:\theta\big(r^2;q^4\big)\theta\big(r^4q^2;q^4\big)\big\}.\label{Key-iden-2}
\end{align}
By \eqref{JTPI}, we have
\begin{align*}
(q^2;q^2)_\infty^2\!\:\theta\big({-}r;q^2\big)\theta\big(r^3;q^2\big)
=\sum_{m=-\infty}^\infty\sum_{n=-\infty}^\infty(-1)^nr^{m+3n}q^{m(m-1)+n(n-1)}.
\end{align*}
We now split the double sum according to the parity of $m+n$. When $m+n$ is even, we
use the change of variables $(m,n)=(t-s,s+t)$; when $m+n$ is odd, we use
$(m,n)=(s-t+1,s+t)$. Both changes of variables are bijections between the corresponding
pairs of integers. After a straightforward simplification, the resulting double sum can
be written as
\begin{align}
(q^2;q^2)_\infty^2\!\: &\theta\big({-}r;q^2\big)\theta\big(r^3;q^2\big)\nonumber\\
 &=\sum_{s=-\infty}^\infty(-q^2r^2)^{s}q^{2s(s-1)}\sum_{t=-\infty}^\infty(-r^4)^{t}
q^{2t(t-1)}\nonumber\\
 &\quad+r\sum_{s=-\infty}^\infty(-q^2r^4)^{s}q^{2s(s-1)}\sum_{t=-\infty}^\infty
(-r^2)^{t}q^{2t(t-1)}.\label{Key-iden-3}
\end{align}
Applying \eqref{JTPI} separately to the four single sums on the right-hand side of
\eqref{Key-iden-3}, we deduce that
\begin{align}
 &(q^2;q^2)_\infty^2\!\:\theta\big({-}r;q^2\big)\theta\big(r^3;q^2\big)\nonumber\\
 &\qquad=(q^4;q^4)_\infty^2\big\{\theta\big(q^2r^2;q^4\big)\theta\big(r^4;q^4\big)
+r\!\:\theta\big(r^2;q^4\big)\theta\big(q^2r^4;q^4\big)\big\}.\label{Key-iden-4}
\end{align}
Dividing both sides of \eqref{Key-iden-4} by $(q^2;q^2)_\infty^2$, we recover
\eqref{Key-iden-2}, and hence \eqref{Key-iden-1}.

We now turn to \eqref{Def-C}. By \eqref{H-theta-exp}, we have
\begin{align*}
C(r,z)=2(-q^2;q^2)_\infty^2\!\:S(r,z),
\end{align*}
where
\begin{align}
S(r,z) &=r^2\theta\big({-}z/r^2;q^2\big)\theta\big(z^2r^2q^2,r^2q^2;q^4\big)
\nonumber\\
 &\qquad\qquad\quad+\theta\big({-}zr^2;q^2\big)\theta\big(z^2q^2/r^2,r^2q^2;q^4\big)
\nonumber\\
 &\qquad\qquad\qquad\qquad\quad+r\!\:\theta\big({-}z;q^2\big)
\theta\big(z^2q^2,r^4q^2;q^4\big).\label{Def-S}
\end{align}
We simplify $S(r,z)$ by applying Weierstrass' fundamental theta identity
\cite{Wei1882}; see also \cite[Eq.~(11.4.3)]{GR2004}:
\begin{align}
\theta(xy,x/y,uv,u/v;q) &-\theta(xv,x/v,uy,u/y;q)\nonumber\\
 &\qquad\qquad=\dfrac{u}{y}\!\;\theta(xu,x/u,yv,y/v;q).\label{Wei-iden}
\end{align}
Replacing $q$ by $q^4$, and then making the substitution
\begin{align*}
(x,y,u,v)\mapsto(zr^2q,zq/r^2,zq,-1/q),
\end{align*}
we find that
\begin{align}
\theta\big(z^2q^2,r^4,-z,-zq^2;q^4\big) &-\theta\big({-}zr^2,-zr^2q^2,z^2q^2/r^2,r^2;
q^4\big)\nonumber\\
 &\quad=r^2\!\:\theta\big(z^2r^2q^2,r^2,-z/r^2,-zq^2/r^2;q^4\big).\label{Der-iden-1}
\end{align}
Dividing both sides of \eqref{Der-iden-1} by $\theta\big(r^2;q^4\big)$, we deduce that
\begin{align}
r^2\!\:\theta\big({-}z/r^2;q^2\big)\theta\big(z^2r^2q^2;q^4\big)
 &+\theta\big({-}zr^2;q^2\big)\theta\big(z^2q^2/r^2;q^4\big)\nonumber\\
 &\quad=\dfrac{\theta\big(r^4;q^4\big)}{\theta\big(r^2;q^4\big)}\theta\big({-}z;q^2\big)
\theta\big(z^2q^2;q^4\big).\label{Der-iden-2}
\end{align}
Substituting \eqref{Der-iden-2} into \eqref{Def-S} and then applying
\eqref{Key-iden-1}, we conclude that
\begin{align*}
C(r,z)=2\dfrac{\theta\big(r^3,z^2;q^2\big)}{\theta\big(r,z;q^2\big)}.
\end{align*}
Thus, \eqref{Fund-iden-2} follows.
\end{proof}

The general theta function identity \eqref{Kim-iden-1} now follows immediately by
combining \eqref{Fund-iden-1} and \eqref{Fund-iden-2}, together with \eqref{Theta-prop}.

\section{Elementary proofs of Theorems~\ref{Main-THM-2}--\ref{Main-THM-4}}
\label{Sect:EP-2}

This section is devoted to the elementary proofs of
Theorems~\ref{Main-THM-2}--\ref{Main-THM-4}. The main ingredients used in these proofs
were developed in subsection~\ref{Subs:FS} during the proof of \eqref{Kim-iden-1}. In
particular, the auxiliary identities established there also serve as the principal
tools in the proofs of Theorems~\ref{Main-THM-2}--\ref{Main-THM-4}. Thus, the four
identities \eqref{Kim-iden-1}--\eqref{Kim-iden-4} admit elementary $q$-series proofs
within a common framework.

\subsection{Elementary proof of Theorem~\ref{Main-THM-2}}
By \eqref{Rel-iden-3}--\eqref{Rel-iden-6}, we obtain that
\begin{align}
\mathscr{D}(x_1,x_2)+\mathscr{S}(x_1,x_2)\mathscr{D}(qx_1,qx_2)=D(x_1)M(x_2)
+M(x_1)D(x_2).\label{PF-step-2-1}
\end{align}
Substituting \eqref{Def-D-x} and \eqref{Def-M-x} into the right-hand side of
\eqref{PF-step-2-1}, we deduce that
\begin{align}
D(x_1)M(x_2) &+M(x_1)D(x_2)\nonumber\\
 &=\dfrac{2abc^2d}{q}(-q;q)_\infty^2\!\:\theta(-a,-b;q)\nonumber\\
 &\quad\times\big\{\theta\big({-}qy/r,-yr;q^2\big)
+r\!\:\theta\big({-}y/r,-qyr;q^2\big)\big\},\label{PF-step-2-2}
\end{align}
where $r=d/c$ and $y=abcd$.

It remains to evaluate the theta function combination in braces. For this purpose,
applying \eqref{JTPI}, we have
\begin{align*}
(q;q)_\infty^2\!\:\theta(-y,-r;q)=\sum_{m=-\infty}^\infty\sum_{n=-\infty}^\infty y^{m}
r^{n}q^{m(m-1)/2+n(n-1)/2}.
\end{align*}
We split this double sum according to the parity of $m+n$. When $m+n$ is even, we make
the change of variables $(m,n)=(s+t,s-t)$, which gives a bijection from $\mathbb{Z}^2$ onto the even sublattice $\{(m,n)\in\mathbb{Z}^2\colon m+n\equiv0\pmod{2}\}$. When
$m+n$ is odd, we instead use $(m,n)=(s+t,s-t+1)$, which is a bijection from $\mathbb{Z}^2$ onto the odd sublattice
$\{(m,n)\in\mathbb{Z}^2\colon m+n\equiv1\pmod{2}\}$. Under these two changes of
variables, the double sum becomes
\begin{align}
(q;q)_\infty^2\!\:\theta(-r,-y;q) &=\sum_{s=-\infty}^\infty(yr)^sq^{s(s-1)}
\sum_{t=-\infty}^\infty(qy/r)^tq^{t(t-1)}\nonumber\\
 &\quad+r\sum_{s=-\infty}^\infty(qyr)^sq^{s(s-1)}\sum_{t=-\infty}^\infty(y/r)^t
q^{t(t-1)}.\label{PF-step-2-3}
\end{align}
Applying separately to the four single sums in \eqref{PF-step-2-3}, we obtain that
\begin{align}
\theta(-r,-y;q)=(-q;q)_\infty^2\big\{\theta\big({-}qy/r,-yr;q^2\big)
+r\!\:\theta\big({-}y/r,-qyr;q^2\big)\big\}.\label{PF-step-2-4}
\end{align}
Finally, substituting \eqref{PF-step-2-4} into \eqref{PF-step-2-2}, we conclude that
\begin{align*}
\mathscr{D}(x_1,x_2) &+\mathscr{S}(x_1,x_2)\mathscr{D}(qx_1,qx_2)\\
 &\quad=\dfrac{2abc^2d}{q}\theta(-a,-b,-d/c,-abcd;q).
\end{align*}

\subsection{Elementary proof of Theorem~\ref{Main-THM-3}}
By \eqref{Def-D-x}, \eqref{Def-N-x} and \eqref{Aux-iden-1}, we have
\begin{align}
A(x)-\mathscr{S}(x)A(qx)=A(-x)-\mathscr{S}(x)A(-qx)=-N(x).\label{Comb-iden}
\end{align}
It follows from \eqref{Comb-iden} that
\begin{align}
\mathscr{D}(x_1,qx_1,x_2)-\mathscr{S}(x_2)\mathscr{D}(x_1,qx_1,qx_2)
=-\mathscr{D}(x_1,qx_1)N(x_2).\label{PF-step-3-1}
\end{align}

On the other hand, applying Warnaar's identity \eqref{War-gene} with $c=x$, we find that
\begin{align}
\mathscr{D}(x,qx)=2x(-q;q)_\infty^2\!\:\theta\big({-}a,-b,-abx^2;q\big).
\label{PF-step-3-2}
\end{align}
Substituting \eqref{Aux-iden-3} and \eqref{PF-step-3-2} into \eqref{PF-step-3-1}, with
$x$ replaced by $x_1$ only in \eqref{PF-step-3-2}, and simplifying, we conclude that
\begin{align*}
\mathscr{D}(x_1,qx_1,x_2) &-\mathscr{S}(x_2)\mathscr{D}(x_1,qx_1,qx_2)\nonumber\\
 &\quad=-\dfrac{2abcd^2}{q}\dfrac{\theta\big(a^2,b^2,a^2b^2c^4,abd^2q;q^2\big)}
{\theta\big(a,b,abc^2,abc^2q;q^2\big)}.
\end{align*}

\subsection{Elementary proof of Theorem~\ref{Main-THM-4}}
First, making the parameter substitution $(a,b,x)\mapsto(ac^2,bc^2,1/c)$ in
\eqref{PF-step-3-2} and simplifying, we find that
\begin{align}
\hat{A}(x_1)\check{A}(qx_1)+\hat{A}(-x_1)\check{A}(-qx_1)=2(-q;q)_\infty^2\!\:\theta
\big({-}ac^2,-bc^2,-abc^2,q\big).\label{PF-step-4-1}
\end{align}

Next, substituting \eqref{Aux-iden-3} into \eqref{Comb-iden}, we deduce that
\begin{align}
A(x)-\mathscr{S}(x)A(qx) &=-(q;q^2)_\infty^2\!\:\mathscr{S}(x)\!\:\theta\big(aq,
bq,abqx^2;q^2\big),\label{PF-step-4-2}\\
A(-x)-\mathscr{S}(x)A(-qx) &=-(q;q^2)_\infty^2\!\:\mathscr{S}(x)\!\:\theta\big(aq,
bq,abqx^2;q^2\big).\label{PF-step-4-3}
\end{align}
Making the parameter substitution $(a,b,x)\mapsto(ad^2,bd^2,1/d)$ in
\eqref{PF-step-4-2} and \eqref{PF-step-4-3}, respectively, and simplifying, we obtain
that
\begin{align}
\hat{A}(x_2) &-x_2\mathscr{S}(x_2)\check{A}(qx_2)\nonumber\\
 &\quad=-x_2\mathscr{S}(x_2)(q;q^2)_\infty^2\!\:\theta\big(ad^2q,bd^2q,abd^2q;q^2\big),
\label{PF-step-4-4}\\
\hat{A}(-x_2) &+x_2\mathscr{S}(x_2)\check{A}(-qx_2)\nonumber\\
 &\quad=x_2\mathscr{S}(x_2)\!\:(q;q^2)_\infty^2\theta\big(ad^2q,bd^2q,abd^2q;q^2\big).
\label{PF-step-4-5}
\end{align}
Combining \eqref{PF-step-4-1}, \eqref{PF-step-4-4} and \eqref{PF-step-4-5}, we conclude
that
\begin{align*}
 &\hat{A}(x_1)\check{A}(qx_1)\hat{A}(x_2)-\hat{A}(-x_1)\check{A}(-qx_1)\hat{A}(-x_2)\\
 &\quad-x_2\mathscr{S}(x_2)\big\{\hat{A}(x_1)\check{A}(qx_1)\check{A}(qx_2)
+\hat{A}(-x_1)\check{A}(-qx_1)\check{A}(-qx_2)\big\}\nonumber\\
 &\qquad=\hat{A}(x_1)\check{A}(qx_1)\big\{\hat{A}(x_2)-x_2\mathscr{S}(x_2)
\check{A}(x_2q)\big\}\\
 &\qquad\qquad-\hat{A}(-x_1)\check{A}(-qx_1)\big\{\hat{A}(-x_2)+x_2\mathscr{S}(x_2)
\check{A}(-x_2q)\big\}\\
 &\qquad=-\dfrac{2abd^3}{q}\theta\big({-}ac^2,-bc^2,-abc^2;q\big)
\theta\big(ad^2q,bd^2q,abd^2q;q^2\big).
\end{align*}

\section{Closing remarks}

Kim \cite[Theorem~6.1]{Kim2026} also proved the following general theta function
identity:
\begin{align}
\theta\big({-}acq,-acq,-a^2c,-a^2c;q^2\big) &-a\!\:\theta\big({-}ac,-ac,-a^2cq,-a^2cq;
q^2\big)\nonumber\\
 &\quad=\theta\big(a,a^3c^2,q,q;q^2\big).\label{Kim-iden-5}
\end{align}
In fact, \eqref{Kim-iden-5} follows directly from Weierstrass' fundamental theta
identity \eqref{Wei-iden}. More precisely, replacing $q$ by $q^2$ in \eqref{Wei-iden}
and making the parameter substitution
\begin{align*}
(x,y,u,v)\mapsto(a^{3/2}c\!\:q^{1/2},-a^{-1/2}q^{1/2},-a^{1/2}q^{1/2},a^{-3/2}c^{-1}
q^{1/2}),
\end{align*}
yields \eqref{Kim-iden-5} after simplification.

\section*{Acknowledgements}

The author is sincerely grateful to his colleague, Dr.~Lichun Liang, for his generous
assistance throughout this research, and Professor Sun Kim for her valuable comments
and suggestions on an earlier version of this manuscript.
This work was partially supported by the National Natural Science Foundation of China
(No.~12201093) and the Science and Technology Research Program of Chongqing Municipal
Education Commission (No.~KJQN202500501).

\section*{Declaration of AI usage}

During the development of this work, the author used ChatGPT (OpenAI, GPT-5.6 Pro)
as an AI-assisted research tool to explore possible proof strategies for
Theorem~\ref{Main-THM-1}. All suggestions generated by the tool were critically
evaluated and independently verified by the author. The resulting arguments were substantially revised, simplified and reformulated by the author, who takes full
responsibility for the accuracy, originality and integrity of the mathematical content
of this work.

\bibliographystyle{amsplain}

\end{document}